\documentclass[12pt,a4paper,reqno]{amsart}
\usepackage{fullpage}
\usepackage{verbatim}
\usepackage{hyperref}
\usepackage{pslatex}

\newcommand{\odip}[2]{o _{#1}\!\left(#2\right)\mathchoice{\!}{}{}{}}
\newcommand{\odi}[1]{\odip{}{#1}}
\newcommand{\Odip}[2]{\mathcal{O}_{#1}\left(#2\right)}
\newcommand{\Odi}[1]{\mathcal{O}\left(#1\right)}
\newcommand{\dx}{\mathrm{d}}
\newcommand{\e}{\mathrm{e}}
\newcommand{\ii}{\mathrm{i}}
\newcommand{\E}{\widetilde{\mathcal{E}}}
\newcommand{\C}{\mathcal{C}}
\newcommand{\eps}{\varepsilon}
\newcommand{\Stilde}{\widetilde{S}}

\newtheorem{Theorem}{Theorem}[section]
\newtheorem{Lemma}{Lemma}[section]

\allowdisplaybreaks

\title[Representations of an integer as a sum of prime powers]
  {On the average number of representations of an integer \\
  as a sum of like prime powers}

\author[M.~Cantarini, A.~Gambini, A.~Zaccagnini]
       {Marco Cantarini, Alessandro Gambini, Alessandro Zaccagnini}

\date{\today}

\subjclass[2010]{Primary 11P32. Secondary 11P55, 11P05}
\keywords{Waring-Goldbach problem; Hardy-Littlewood method}

\begin{document}

\begin{abstract}
We investigate the average number of representations of a positive
integer as the sum of $k + 1$ perfect $k$-th powers of primes.
We extend recent results of Languasco and the last Author, which
dealt with the case $k = 2$ \cite{LanguascoZ2017e} and
$k = 3$ \cite{LanguascoZ2017c} respectively.
We use the same technique to study the corresponding problem for sums
of just $k$ perfect $k$-th powers of primes.
\end{abstract}

\maketitle

\section{Introduction}

The problem of representing a large integer $n$, satisfying suitable
congruence conditions, as a sum of a prescribed number of powers
of primes, say $n = p_1^{k_1} + \cdots + p_s^{k_s}$, is classical.
Here $k_1$, \dots, $k_s$ denote fixed positive integers.
This class of problems includes both the binary and ternary Goldbach
problem, and Hua's problem.
If the \emph{density} $\rho = k_1^{-1} + \cdots + k_s^{-1}$ is large
and $s \ge 3$, it is often possible to give an asymptotic formula for
the number of different representations the integer $n$ has.
When the density $\rho$ is comparatively small, the individual problem
is usually intractable and it is reasonable to turn to the easier task
of studying the average number of representations, if possible
considering only integers $n$ belonging to a \emph{short} interval
$[N, N + H]$, say, where $H \ge 1$ is as small as possible.

Here we deal with the average number of representations of a positive
integer $n$ as the sum of $k + 1$ perfect $k$-th powers of prime
numbers.
The case $k = 2$ (actually, to be exact a slightly more general
problem) has been studied in \cite{LanguascoZ2017e}, while the case
$k = 3$ has been studied in \cite{LanguascoZ2017c}.
Here we give a uniform and simpler proof which is valid for general
$k \ge 2$.
Let
\begin{equation}
\label{Rk-def}
  R_k(n)
  =
  \sum_{n = m_1^k + \cdots + m_{k + 1}^k}
    \Lambda(m_1) \cdots \Lambda(m_{k + 1}),
\end{equation}
where $\Lambda$ is the von Mangoldt function, that is,
$\Lambda(p^m) = \log(p)$ if $p$ is a prime number and $m$ is a
positive integer, and $\Lambda(n) = 0$ for all other integers.

\begin{Theorem}
\label{CGZ-uncond}
Let $k \ge 2$ be a fixed integer.
For every $\eps > 0$ there exists a constant $C = C(\eps) > 0$,
independent of $k$, such that
\[
  \sum_{n = N + 1}^{N + H} R_k(n)
  =
  \Gamma\Bigl(1 + \frac1k \Bigr)^k H N^{1/k}
  +
  \Odip{k}{H N^{1 / k}
    \exp\Bigl\{ -C \Bigl(\frac{\log N}{\log\log N}\Bigr)^{1/3} \Bigr\}}
\]
as $N \to +\infty$, uniformly for
$N^{1 - 5 / (6k) + \eps} < H < N^{1 - \eps}$, where
$\Gamma$ is the Euler Gamma-function.
\end{Theorem}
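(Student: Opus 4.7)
The plan is to proceed by the Hardy--Littlewood circle method in the $L^{2}$-type formulation used in the Languasco--Zaccagnini papers \cite{LanguascoZ2017e,LanguascoZ2017c}, avoiding any explicit dissection of the unit circle into major and minor arcs. Set $X := N^{1/k}$ and introduce the two generating functions
\[
  \Stilde(\alpha) := \sum_{m \le X} \Lambda(m)\, e(m^{k} \alpha),
  \qquad
  \widetilde{T}(\alpha) := \sum_{n \le N} \frac{n^{1/k - 1}}{k}\, e(n \alpha),
\]
together with the short-interval kernel $U(\alpha) := \sum_{h = 1}^{H} e(-(N + h) \alpha)$. By orthogonality
\[
  \sum_{n = N + 1}^{N + H} R_{k}(n)
  = \int_{-1/2}^{1/2} \Stilde(\alpha)^{k + 1}\, U(\alpha)\, \dx\alpha,
\]
and $\widetilde{T}$ plays the role of the archimedean model for $\Stilde$: its $n$-th Fourier coefficient $k^{-1} n^{1/k - 1}$ is the expected density of prime $k$-th powers at $n$, arising from the substitution $m = n^{1/k}$ together with the prime number theorem.

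\smallskip

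\emph{Main term.} First I would compute
\[
  \int_{-1/2}^{1/2} \widetilde{T}(\alpha)^{k + 1}\, U(\alpha)\, \dx\alpha
  = \sum_{h = 1}^{H}\; \sum_{n_{1} + \cdots + n_{k + 1} = N + h} \prod_{i = 1}^{k + 1} \frac{n_{i}^{1/k - 1}}{k}.
\]
For each fixed $h$, the inner sum is, up to a negligible discretisation error, the $(k + 1)$-fold additive convolution of the density $k^{-1} u^{1/k - 1}$ evaluated at $N + h$; iterating the Beta-function identity $\int_{0}^{u} v^{a - 1}(u - v)^{b - 1}\, \dx v = B(a, b)\, u^{a + b - 1}$ one finds that this convolution equals $\Gamma(1 + 1/k)^{k} (N + h)^{1/k}$. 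Summing over $h \in \{1, \dots, H\}$ produces the claimed main term $\Gamma(1 + 1/k)^{k}\, H\, N^{1/k}$.

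\smallskip

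\emph{Error term.} The difference $\Stilde^{k + 1} - \widetilde{T}^{k + 1} = (\Stilde - \widetilde{T}) \sum_{j = 0}^{k} \Stilde^{j}\, \widetilde{T}^{k - j}$ reduces the estimate to bounding each piece by Cauchy--Schwarz in the form
\[
  \biggl| \int (\Stilde - \widetilde{T})\, \Stilde^{j}\, \widetilde{T}^{k - j}\, U \, \dx\alpha \biggr|
  \le \biggl( \int |\Stilde - \widetilde{T}|^{2}\, |U|^{2}\, \dx\alpha \biggr)^{\!1/2}
      \biggl( \int |\Stilde|^{2 j}\, |\widetilde{T}|^{2 (k - j)}\, \dx\alpha \biggr)^{\!1/2}.
\]
Expanding $|U(\alpha)|^{2}$ in Fourier series, the first factor becomes $\sum_{|m| < H}(H - |m|)$ times an autocorrelation of the coefficients of $\Stilde - \widetilde{T}$, which after change of variable $m = n^{1/k}$ is a short-interval mean-square for the prime number theorem error, bounded via the Vinogradov--Korobov zero-free region to yield the exponential saving $\exp\{ - C (\log N / \log \log N)^{1/3} \}$. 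The second factor is handled by Hölder's inequality, reducing it to $L^{2(k + 1)}$-moments of $\Stilde$ and $\widetilde{T}$, which are controlled by Hua's lemma and Vinogradov's mean value theorem for $k$-th power Weyl sums.

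\smallskip

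The main obstacle is the balance between these two factors. The lower threshold $H > N^{1 - 5/(6 k) + \eps}$ reflects the strongest currently available $L^{2(k + 1)}$ mean-value estimate for $\sum_{m \le X} e(m^{k} \alpha)$, and any improvement there would immediately broaden the admissible range of $H$. The technical heart of the argument is verifying that the combined error is indeed
\[
  \Odip{k}{H\, N^{1/k}\, \exp\bigl\{ -C (\log N / \log \log N)^{1/3} \bigr\}}
\]
uniformly throughout the stated range of $H$.
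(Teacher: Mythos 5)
Your outline diverges from the paper in a way that breaks the argument at its core. You factor $\Stilde^{k+1}-\widetilde{T}^{k+1}=(\Stilde-\widetilde{T})\sum_{j=0}^{k}\Stilde^{j}\widetilde{T}^{k-j}$ and then apply Cauchy--Schwarz to every term. This leaves you needing, for $j=k$, the moment $\int_{0}^{1}\vert\Stilde\vert^{2k}\,\dx\alpha$ (and up to $L^{2(k+1)}$ moments after your H\"older step). For a single $k$-th power these moments are \emph{not} controlled by Hua's lemma (which handles $2^{j}$-th moments, far too weak here) nor by Vinogradov's mean value theorem (which concerns the full system of degrees $1,\dots,k$); the bound you would need, $\int\vert\Stilde\vert^{2k}\ll N^{1+\eps}$, is of Hypothesis~K* strength and open for $k\ge 3$. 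Even granting it, the diagonal forces $\int\vert\Stilde\vert^{2k}\gg N$, so your second factor is $\gg N^{1/2}$; and your first factor is $\gg (HN^{1/k}\log N)^{1/2}$, because $\int_{0}^{1}\vert\Stilde-\widetilde{T}\vert^{2}\vert U\vert^{2}\,\dx\alpha$ contains the diagonal contribution $H\sum_{m}\vert\Lambda(m)-\cdots\vert^{2}\asymp HN^{1/k}\log N$: there is \emph{no} exponential saving in the mean square over the whole circle, Vinogradov--Korobov or not. The saving exists only on a neighbourhood $\vert\alpha\vert<\xi$ of the origin with $\xi<N^{-1+5/(6k)-\eps}$ (Lemma~\ref{LP-Lemma-gen}), and that range of validity --- not any $L^{2(k+1)}$ mean-value estimate, as you assert --- is the true source of the threshold $H>N^{1-5/(6k)+\eps}$. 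Multiplying your two factors gives at best $H^{1/2}N^{1/2+1/(2k)+\eps}$, which is $\ll HN^{1/k}$ only for $H\gg N^{1-1/k+\eps}$ and carries no exponential factor, so the theorem as stated is out of reach by this route.

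The paper's remedy is precisely the point you miss: identity~\eqref{identity} writes $x^{k+1}-y^{k+1}$ with a factor $(x-y)^{2}$ in all but one term, so the full $L^{2}$ bound on $\E_{k}=\Stilde_{k}-\gamma_{k}z^{-1/k}$ applies directly --- no Cauchy--Schwarz loss and no high moments of $\Stilde_{k}$ are ever needed, the companion factors being bounded trivially by $N^{(k-1)/k}$ --- while the single term $(k+1)(x-y)y^{k}$ is handled by Cauchy--Schwarz against $\int\vert y\vert^{2k}\ll\int\vert z\vert^{-2}$, which is explicitly computable. Since the unconditional $L^{2}$ bound holds only near the origin, the circle must still be split at $\vert\alpha\vert=B/H$, and the complementary arc is disposed of with the Robert--Sargos fourth-moment bound (Lemmas~\ref{fourth-power} and~\ref{fourth-power-avg}) rather than with any $2k$-th moment. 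You would need to rebuild your error analysis around these two ingredients for the proof to go through.
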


\begin{Theorem}
\label{CGZ-cond}
Let $k \ge 2$ be a fixed integer and assume that the Riemann
Hypothesis (RH) holds.
Then
\[
  \sum_{n = N + 1}^{N + H} R_k(n)
  =
  \Gamma\Bigl(1 + \frac1k \Bigr)^k H N^{1/k}
  +
  \Odip{k}{\Phi_k(N, H)},
\]
uniformly for $H = \infty(N^{1 - 1 / k} L^3)$ with $H = \odi{N}$,
where $f = \infty(g)$ means $g = \odi{f}$, $L = \log N$ and
$\Phi_k(N, H) = N L^3 + H^2 N^{1/k - 1} + H^{1 / 2} N^{1 / 2 + 1 / (2 k)} L
  +
  H N^{1 / (2 k)} L^{3 / 2}$.
\end{Theorem}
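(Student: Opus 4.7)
The natural approach is the Hardy--Littlewood circle method with Linnik's exponential weights. Writing $z = 1/N - 2\pi\ii\alpha$, set
\[
  \Stilde_k(\alpha)
  = \sum_{m \ge 1} \Lambda(m) \e^{-m^k/N} \e(m^k\alpha),
  \qquad
  V(\alpha) = \sum_{n = N+1}^{N+H} \e(-n\alpha),
\]
and let $\widetilde{T}_k(\alpha) = \Gamma(1+1/k) z^{-1/k}$ be the anticipated main term (the singular contribution to the explicit formula for $\sum_m \Lambda(m) \e^{-m^k z}$, which under RH furnishes $\Stilde_k$ up to a controlled sum over nontrivial zeros of $\zeta$). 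Orthogonality yields
\[
  \int_0^1 \Stilde_k(\alpha)^{k+1} V(\alpha)\, \dx\alpha
  =
  \sum_{n = N+1}^{N+H} R_k(n)\, \e^{-n/N}.
\]
Since $H = \odi{N}$ we have $\e^{-n/N} = \e^{-1}(1 + \Odi{H/N})$ for $n \in [N+1, N+H]$, so partial summation converts this weighted sum into the unweighted one, at a cost of $\Odi{H^2 N^{1/k - 1}}$ once the main term of size $HN^{1/k}$ has been extracted.

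The main term arises by substituting $\widetilde{T}_k$ for $\Stilde_k$. Rotating the contour $\alpha \in [-1/2, 1/2]$ into a Hankel loop around $z = 0$ evaluates
\[
  \int_{-1/2}^{1/2} z^{-(k+1)/k} \e(-n\alpha)\, \dx\alpha
  =
  \frac{n^{1/k}\, \e^{-n/N}}{\Gamma(1+1/k)}
  + \Odi{1},
\]
so that summation over $n \in [N+1, N+H]$ and multiplication by $\Gamma(1+1/k)^{k+1}$ reproduces $\Gamma(1+1/k)^k H N^{1/k} \e^{-1}$ as the leading behaviour, matching the theorem after removal of the $\e^{-n/N}$ weight. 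The discrepancy $\Stilde_k^{k+1} - \widetilde{T}_k^{k+1}$ is then factored as
\[
  \Stilde_k^{k+1} - \widetilde{T}_k^{k+1}
  =
  \bigl(\Stilde_k - \widetilde{T}_k\bigr)
  \sum_{j = 0}^{k} \Stilde_k^{\,j}\, \widetilde{T}_k^{\,k-j},
\]
which reduces matters to bounding $k+1$ integrals of the form $\int (\Stilde_k - \widetilde{T}_k)\, \Stilde_k^{\,j}\, \widetilde{T}_k^{\,k-j}\, V\, \dx\alpha$ by Cauchy--Schwarz. The three conditional ingredients are: (i) the mean-square bound $\int_0^1 |\Stilde_k - \widetilde{T}_k|^2\, \dx\alpha \ll N^{2/k - 1} L^2$, obtained under RH from the explicit formula by summing the zero contributions $\Gamma(\rho/k) z^{-\rho/k}/k$; (ii) the size estimate $|\widetilde{T}_k(\alpha)| \ll |z|^{-1/k}$ together with $\int_0^1 |z|^{-2/k}\, \dx\alpha \ll N^{2/k - 1}$; (iii) the pointwise $|V(\alpha)| \ll \min(H, \|\alpha\|^{-1})$ with $\int_0^1 |V|^2\, \dx\alpha = H$.

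The hardest part, as in the earlier works \cite{LanguascoZ2017e, LanguascoZ2017c}, is to arrange these Cauchy--Schwarz splittings so that each term is small enough and no logarithmic factors are squandered. Tracking them carefully should yield exactly the four summands of $\Phi_k$: $NL^3$ from the pure $(\Stilde_k - \widetilde{T}_k)^{k+1}$-type contribution paired with $\int |V|^2$; $H^2 N^{1/k - 1}$ from the main-term expansion noted above; $H^{1/2} N^{1/2 + 1/(2k)} L$ from the summand with a single factor of $\Stilde_k - \widetilde{T}_k$ combined with an $L^2$-estimate on $V$; and $H N^{1/(2k)} L^{3/2}$ from an intermediate configuration. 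The lower bound $H \gg N^{1 - 1/k} L^3$ on the admissible range is precisely what makes $NL^3$ smaller than the main term $HN^{1/k}$; uniformity in $k$ is not a real obstacle because the exponential weights $\e^{-m^k/N}$ automatically localise frequencies at scale $1/N$, so no explicit major/minor-arc dissection is needed.
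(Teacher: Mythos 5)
Your overall architecture (Linnik weights, comparison of $\Stilde_k$ with $\gamma_k z^{-1/k}$, a Laplace-type formula for the main term, removal of the weight $\e^{-n/N}$ at the end) matches the paper's, and those parts of your sketch are sound. The genuine gap is in how you treat the difference of $(k+1)$-st powers. You factor $x^{k+1}-y^{k+1}=(x-y)\sum_{j=0}^{k}x^{j}y^{k-j}$ with $x=\Stilde_k$, $y=\gamma_k z^{-1/k}$, so every summand carries only \emph{one} factor of the error $\E_k=x-y$. After Cauchy--Schwarz the summand $j=k$ forces you to bound $\int|\Stilde_k|^{2k}\,\dx\alpha$, and no adequate estimate for this $2k$-th moment is available: the best unconditional route is $\max|\Stilde_k|^{2k-4}$ times the fourth moment of Lemma~\ref{fourth-power}, i.e.\ roughly $N^{2-2/k+\eps}$, which yields an error of order $H^{1/2}N^{1-1/(2k)+\eps}$; this beats the main term $HN^{1/k}$ only if $H\gg N^{2-3/k}$, impossible for $k\ge3$ with $H=\odi{N}$. (With a Hypothesis-K-type bound $\int|\Stilde_k|^{2k}\ll N^{1+\eps}$ your term would match $H^{1/2}N^{1/2+1/(2k)}L$, but no such bound is known.) The entire point of the paper is to avoid this term: identity~\eqref{identity} places $(x-y)^{2}$ in front of every monomial containing a power of $x$, so the squared error absorbs the $L^{2}$-estimate of Lemma~\ref{LP-Lemma-gen} directly while the remaining factors are bounded pointwise by $N^{(k-1)/k}$, and the single first-power term $(k+1)(x-y)y^{k}$ involves only the explicit function $y$, whose $2k$-th moment $\int|z|^{-2}\,\dx\alpha$ is elementary. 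Your telescoping factorization does not reduce to this, and as written the argument does not close for $k\ge3$.

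Two further problems. Your ingredient (i) is false: $\int_0^1|\E_k|^{2}\,\dx\alpha$ cannot be $\ll N^{2/k-1}L^{2}$, since Parseval gives $\int_0^1|\Stilde_k|^{2}\,\dx\alpha\asymp_k N^{1/k}L$ already; the correct RH input (Lemma~\ref{LP-Lemma-gen}) is $\int_{-\xi}^{\xi}|\E_k|^{2}\,\dx\alpha\ll_k N^{1/k}\xi L^{2}$, and the dependence on $\xi$ is essential. Relatedly, your claim that no major/minor-arc dissection is needed is untenable: pairing the full-interval $L^{2}$ bound with $|V|\le H$ gives a contribution of order $HNL^{2}$ from the $(x-y)^{2}$-type terms, which swamps $HN^{1/k}$. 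The paper must, and does, split at $|\alpha|=1/H$, using $|U(\alpha,H)|\le\min(H,|\alpha|^{-1})$ on the two ranges together with a partial integration of $F(\xi)=\int_0^{\xi}|\E_k|^{2}\,\dx\alpha$ to obtain $\int_{1/H}^{1/2}|\E_k|^{2}\,\alpha^{-1}\dx\alpha\ll_k N^{1/k}L^{3}$; this is precisely where the term $NL^{3}$ of $\Phi_k$ and the restriction $H=\infty(N^{1-1/k}L^{3})$ originate.
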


The density of this problem is $1 + 1 / k$.
Theorems~\ref{CGZ-uncond} and~\ref{CGZ-cond} contain as special cases
the results in \cite{LanguascoZ2017e} and \cite{LanguascoZ2017c}.
The limitation in Theorem~\ref{CGZ-uncond} is due to the corresponding
one in Lemma~\ref{LP-Lemma-gen}, whereas the limitation in
Theorem~\ref{CGZ-cond} is the expected one.

The main new ingredient is the use of the elementary identity
\begin{equation}
\label{identity}
  x^{k + 1} - y^{k + 1}
  =
  (x - y)^2
  \sum_{j = 1}^k j x^{k - j} y^{j - 1}
  +
  (k + 1) (x - y) y^k,
\end{equation}
which is valid for integral $k \ge 1$.
We will show in \S\ref{proof-uncond} below that this identity can be
used quite effectively to circumvent the need for sharp bounds for
exponential sums.
In fact, we manage to obtain the ``expected'' limitations (in view of
Lemma~\ref{LP-Lemma-gen}) for the length $H$ of the ``short interval,''
both in the unconditional and in the conditional case.

We remark that the use of identity \eqref{identity} allows us to treat
also the average value of the number of representations of an integer
as a sum of just $k$ perfect $k$-th powers of prime numbers, which is
a problem of density $1$.
Let
\begin{equation}
\label{R'k-def}
  R'_k(n)
  =
  \sum_{n = m_1^k + \cdots + m_k^k}
    \Lambda(m_1) \cdots \Lambda(m_k).
\end{equation}
We can prove the following results, which will also appear in a
forthcoming paper by A.~Languasco with a proof along the lines of
\cite{LanguascoZ2017c}.

\begin{Theorem}
\label{CGZ'-uncond}
Let $k \ge 2$ be a fixed integer.
For every $\eps > 0$ there exists a constant $C = C(\eps) > 0$,
independent of $k$, such that
\[
  \sum_{n = N + 1}^{N + H} R'_k(n)
  =
  \Gamma\Bigl(1 + \frac1k \Bigr)^k H
  +
  \Odip{k}{
    H \exp\Bigl\{ -C \Bigl(\frac{\log N}{\log\log N}\Bigr)^{1/3} \Bigr\}}
\]
as $N \to +\infty$, uniformly for
$N^{1 - 5 / (6k) + \eps} < H < N^{1 - \eps}$.
\end{Theorem}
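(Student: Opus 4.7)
The proof parallels that of Theorem~\ref{CGZ-uncond}, the only substantive difference being that we work with a $k$-fold product of exponential sums rather than a $(k + 1)$-fold one. By orthogonality,
\[
  \sum_{n = N + 1}^{N + H} R'_k(n)
  =
  \int_{-1/2}^{1/2} \Stilde(\alpha)^k \, \overline{V(\alpha)} \, \dx\alpha,
\]
where $\Stilde(\alpha)$ is the usual exponential sum over prime powers (truncated at $(N + H)^{1/k}$) used in the proof of Theorem~\ref{CGZ-uncond}, and $V(\alpha)$ is the short-interval sum over $n \in (N, N + H]$. The continuous approximation $\E(\alpha)$ is introduced exactly as there, and the algebraic identity~\eqref{identity} is replaced by its $k$-power analogue
\[
  x^k - y^k
  =
  (x - y)^2 \sum_{j = 1}^{k - 1} j x^{k - 1 - j} y^{j - 1}
  +
  k (x - y) y^{k - 1},
\]
applied with $x = \Stilde(\alpha)$ and $y = \E(\alpha)$.

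This decomposition splits the integral into three pieces. The \emph{main term} $\int \E^k \overline{V} \, \dx\alpha$ is evaluated directly: by Fourier inversion and the Dirichlet integral formula it equals $\Gamma(1 + 1/k)^k H$ up to a negligible error, by essentially the same calculation that gives the main term $\Gamma(1 + 1/k)^k H N^{1/k}$ of Theorem~\ref{CGZ-uncond}, with one fewer integration in the underlying volume and a corresponding absence of the factor $N^{1/k}$. The \emph{linear} error $k \int (\Stilde - \E) \E^{k - 1} \overline{V} \, \dx\alpha$ is handled by Cauchy--Schwarz, the pair $(\Stilde - \E) \overline{V}$ being estimated via Lemma~\ref{LP-Lemma-gen} and the complementary factor $\E^{k - 1}$ by standard $L^2$ mean-value bounds for Weyl-type sums.

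The main obstacle is the \emph{quadratic} error
\[
  \int_{-1/2}^{1/2} (\Stilde - \E)^2 \Bigl(\sum_{j = 1}^{k - 1} j \Stilde^{k - 1 - j} \E^{j - 1}\Bigr) \overline{V(\alpha)} \, \dx\alpha,
\]
where $(\Stilde - \E)^2$ contributes a squared prime-counting error that must be played off against a polynomial in $\Stilde$ and $\E$ and the short-interval sum $\overline{V}$. The strategy is an iterated Cauchy--Schwarz in which one factor is controlled by Lemma~\ref{LP-Lemma-gen} or a Hua-type mean-value inequality and the remaining exponential-sum factors by the trivial $L^{\infty}$ bound $\|\Stilde\|_{\infty}, \|\E\|_{\infty} \ll N^{1/k} \log N$; the total product now carries one fewer factor of $\Stilde$ or $\E$ than the analogous error in Theorem~\ref{CGZ-uncond}, which saves precisely the $N^{1/k}$ needed to match the smaller target error $H \exp\{-C (\log N / \log \log N)^{1/3}\}$. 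The hypothesis $N^{1 - 5 / (6 k) + \eps} < H < N^{1 - \eps}$ is inherited verbatim from the range of validity of Lemma~\ref{LP-Lemma-gen}, and the exponential-in-$(\log N)^{1 / 3}$ saving comes, as usual, from inserting the Vinogradov--Korobov form of the PNT with remainder into that lemma.
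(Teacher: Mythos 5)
Your overall strategy --- apply identity \eqref{identity} with $k$ in place of $k+1$, evaluate the main term by the Laplace-type formula, and control the linear and quadratic error terms by Cauchy--Schwarz together with the $L^2$ bound of Lemma~\ref{LP-Lemma-gen} --- is exactly the one the paper uses. But there is a genuine gap. Unconditionally, Lemma~\ref{LP-Lemma-gen} is valid only for $\xi < N^{-1+5/(6k)-\eps}$, so the identity-plus-$L^2$ argument can be run only on a narrow arc $[-B/H, B/H]$ about the origin (the paper takes $B = N^{2\eps}$); it cannot cover all of $[-1/2,1/2]$. You cite the resulting restriction on $H$, but you never say how you bound the complementary piece
\[
  I'_3 = \int_\C \Stilde_k(\alpha)^{k}\, U(-\alpha, H)\, \e(-N\alpha)\,\dx\alpha,
  \qquad \C = [-1/2,-B/H]\cup[B/H,1/2],
\]
and this is where the real difficulty of the density-one problem lies. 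The paper bounds $I'_3$ by pulling out $\max_\alpha\vert\Stilde_k\vert^{k-4}$ and applying the fourth-moment estimate of Lemma~\ref{fourth-power-avg}, obtaining $N^{\eps}H/B$ --- but this works only for $k \ge 4$: compared with Theorem~\ref{CGZ-uncond} the target error is smaller by a factor $N^{1/k}$, so one has one fewer trivial factor $N^{1/k}$ to spare and the exponent $k-4$ must be nonnegative. For $k \in \{2,3\}$ the argument breaks down and the paper must invoke an extension of Tolev's $L^2$ mean-value lemma (Lemma~\ref{Tolev-Lemma}) for $\Stilde_k$, an additional ingredient your sketch does not supply; without it the stated range of $H$ is not attained for $k=3$.

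A secondary inconsistency: you truncate the generating function at $(N+H)^{1/k}$ and work without weights, yet you invoke Lemma~\ref{LP-Lemma-gen}, which is stated for the infinite exponentially weighted sum $\Stilde_k(\alpha)=\sum_{n\ge1}\Lambda(n)\e^{-n^k z}$. With the paper's setup the quantity actually computed is $\sum_n \e^{-n/N} R'_k(n)$, and one must remove the factor $\e^{-n/N}$ at the end by the bootstrapping argument of \S\ref{final-Th1}; this step is absent from your proposal.
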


\begin{Theorem}
\label{CGZ'-cond}
Let $k \ge 2$ be a fixed integer and assume that the Riemann
Hypothesis holds.
Then
\[
  \sum_{n = N + 1}^{N + H} R'_k(n)
  =
  \Gamma\Bigl(1 + \frac1k \Bigr)^k H
  +
  \Odip{k}{\Phi'_k(N, H)},
\]
uniformly for $H = \infty(N^{1 - 1 / k} L^3)$ with $H = \odi{N}$,
where
$\Phi'_k(N, H) = N^{1 - 1 / k} L^3 + H^2 N^{- 1} +
  H^{1 / 2} N^{1 / 2 - 1 / (2 k)} L + H^{1 - 1/k}  N^{1 / (2 k)} L^{3 / 2}$.
\end{Theorem}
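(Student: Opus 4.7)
The strategy is to mirror the circle-method proof of Theorem~\ref{CGZ-cond}, with the exponent $k + 1$ replaced by $k$ and the \emph{same} algebraic identity \eqref{identity}. Define
\[
  \Stilde(\alpha) = \sum_{m \le N^{1/k}} \Lambda(m)\, e(m^k \alpha),
  \qquad
  \E(\alpha) = \sum_{n \le N} \frac{n^{1/k - 1}}{k}\, e(n \alpha),
\]
and $U(\alpha) = \sum_{h = 1}^H e(h \alpha)$; Parseval's identity then gives
\[
  \sum_{n = N + 1}^{N + H} R'_k(n)
  =
  \int_{-1/2}^{1/2} \Stilde(\alpha)^k\, U(-\alpha)\, e(-N \alpha)\, \dx\alpha.
\]
The main step is to replace $\Stilde^k$ by $\E^k$. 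Applying \eqref{identity} with $k$ replaced by $k - 1$ produces
\[
  \Stilde^k - \E^k
  =
  k\, (\Stilde - \E)\, \E^{k - 1}
  +
  (\Stilde - \E)^2
  \sum_{j = 1}^{k - 1} j\, \Stilde^{k - 1 - j}\, \E^{j - 1},
\]
splitting the integral as $\mathcal{I}_0 + \mathcal{I}_1 + \mathcal{I}_2$, where $\mathcal{I}_0$ is the smooth integral involving $\E^k$, $\mathcal{I}_1$ is the linear correction, and $\mathcal{I}_2$ is the quadratic correction.

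The smooth piece $\mathcal{I}_0$ supplies the main term: a Dirichlet simplex computation shows that the $n$-th Fourier coefficient of $\E^k$ is asymptotic to $\Gamma(1 + 1/k)^k$ for $n \in (N, N + H]$, yielding $\mathcal{I}_0 = \Gamma(1 + 1/k)^k H + \Odip{k}{H^2 N^{-1}}$. The linear correction $\mathcal{I}_1 = k \int (\Stilde - \E)\, \E^{k - 1}\, U(-\alpha)\, e(-N \alpha)\, \dx\alpha$ is treated by Cauchy--Schwarz,
\[
  |\mathcal{I}_1|
  \ll_k
  \Bigl(\int_{-1/2}^{1/2} |\Stilde - \E|^2\, |U|^2\, \dx\alpha\Bigr)^{1/2}
  \Bigl(\int_{-1/2}^{1/2} |\E|^{2 k - 2}\, \dx\alpha\Bigr)^{1/2}.
\]
Gallagher's short-interval inequality applied to the first factor, combined with the conditional mean-square estimate for $\Stilde - \E$ furnished by Lemma~\ref{LP-Lemma-gen} (whence the logarithmic factor $L^3$ of RH origin), together with a direct mean-value computation for the second factor, produces the contributions $N^{1 - 1/k} L^3$ and $H^{1/2} N^{1/2 - 1/(2 k)} L$ of $\Phi'_k(N, H)$.

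The quadratic correction $\mathcal{I}_2$ is the principal technical obstacle, since it involves mixed powers of $\Stilde$ and $\E$. After applying the pointwise bound $\|\Stilde\|_\infty, \|\E\|_\infty \ll_k N^{1/k}$ to the inner factors $\Stilde^{k - 1 - j} \E^{j - 1}$, the residual integral is essentially $\int |\Stilde - \E|^2\, |U|\, \dx\alpha$, which I would bound by a Gallagher-type dyadic decomposition of the frequency $\alpha$ combined with a further invocation of Lemma~\ref{LP-Lemma-gen}. The careful balancing of these ingredients yields the term $H^{1 - 1/k} N^{1/(2 k)} L^{3/2}$; the exponent $1 - 1/k$ of $H$ (rather than the $H$ of the $(k + 1)$-power case) reflects the presence of one fewer copy of $\E$ available to absorb trivially. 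Collecting the estimates for $\mathcal{I}_0, \mathcal{I}_1, \mathcal{I}_2$ yields the stated asymptotic with error $\Phi'_k(N, H)$, and the hypothesis $H = \infty(N^{1 - 1/k} L^3)$ with $H = \odi{N}$ is exactly what is needed to make every summand of $\Phi'_k$ of smaller order than the main term $\Gamma(1 + 1/k)^k H$.
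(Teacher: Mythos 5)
Your high-level skeleton is the one the paper intends: Theorem~\ref{CGZ'-cond} is stated to follow from the argument of \S\ref{proof-cond} with identity \eqref{identity} applied with $k$ in place of $k+1$, i.e.\ a smooth main term plus a quadratic and a linear correction, handled by the RH mean-square bound and Cauchy--Schwarz. But there are two genuine gaps. First, you set the problem up with the truncated, unweighted sums $\sum_{m \le N^{1/k}} \Lambda(m) \e(m^k\alpha)$ and the finite approximant $\sum_{n \le N} k^{-1} n^{1/k-1} \e(n\alpha)$, and then invoke Lemma~\ref{LP-Lemma-gen} for the mean square of their difference. That lemma is stated, and proved in the cited references, only for the exponentially damped sum $\Stilde_k(\alpha) = \sum_{n \ge 1} \Lambda(n)\e^{-n^k z}$ and the approximant $\gamma_k z^{-1/k}$ with $z = 1/N - 2\pi\ii\alpha$; the paper's entire framework (the weight $\e^{-n/N}$ in \eqref{basic-identity} and its removal at the end, as in \S\ref{final-Th2}) exists precisely because the available $L^2$ estimate is for these objects, so you cannot quote it for yours. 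The truncation also costs you the exact Parseval identity: representations with some $m_i^k \in (N, N+H]$ are dropped, and the $n$-th coefficient of your $k$-fold approximant differs from $\Gamma(1+1/k)^k$ by a boundary defect of order $H^{2 - 1/k} N^{1/k - 1}$, which is $\odi{H}$ but is not absorbed by $\Phi'_k(N,H)$ when $H$ is close to $N$.

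Second, the accounting of where the terms of $\Phi'_k$ come from is wrong, and the computations you describe would not produce them. After the pointwise bounds $\vert x\vert, \vert y\vert \ll_k N^{1/k}$, the quadratic correction reduces to $N^{(k-2)/k} \int \vert x - y\vert^2 \vert U\vert \,\dx\alpha$; splitting at $1/H$ and using $F(\xi) \ll_k N^{1/k}\xi L^2$ with partial integration gives $N^{(k-2)/k} \cdot N^{1/k} L^3 = N^{1-1/k} L^3$ --- not $H^{1-1/k} N^{1/(2k)} L^{3/2}$. That last term actually comes from the \emph{linear} correction on $[1/H, 1/2]$, via Cauchy--Schwarz against the weighted moment $\int_{1/H}^{1/2} \vert y\vert^{2k-2} \alpha^{-1}\,\dx\alpha \ll_k H^{2 - 2/k}$ (this moment, not a ``missing copy of the approximant,'' is where the exponent $1 - 1/k$ of $H$ originates), while on $[-1/H, 1/H]$ the moment $\int \vert y\vert^{2k-2}\,\dx\alpha \ll_k N^{1 - 2/k}$ (times $L$ when $k = 2$) yields $H^{1/2} N^{1/2 - 1/(2k)} L$. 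None of these moment computations appears in your writeup; ``careful balancing yields'' is doing all the work, and the balancing as described gives the wrong answer for $\mathcal{I}_2$. To repair the proof: adopt the paper's damped sums, set $x = \Stilde_k(\alpha)$ and $y = \gamma_k z^{-1/k}$, run \S\ref{proof-cond} verbatim with exponent $k$ in place of $k+1$, carry out the two $\vert y\vert^{2k-2}$ moment estimates above, and remove the factor $\e^{-n/N}$ at the end exactly as in \S\ref{final-Th2}.
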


We do not give the full detailed proofs of Theorems~\ref{CGZ'-uncond}
and \ref{CGZ'-cond}, but just a short summary in \S\ref{proof-density-1}. 
The starting point is the use of identity~\eqref{identity} with $k$ in
place of $k + 1$.
The case $k = 2$ of both results is proved in Languasco \& Zaccagnini
\cite{LanguascoZ2016c}.

\section{Definitions and preparation for the proofs}

For real $\alpha$ we write $\e(\alpha) = \e^{2 \pi \ii \alpha}$.
We take $N$ as a large positive integer, and write $L = \log N$ for
brevity.
Let $z = 1 / N - 2 \pi \ii \alpha$ and
\begin{equation}
\label{Stilde-def}
  \Stilde_k(\alpha)
  =
  \sum_{n \ge 1} \Lambda(n) \e^{-n^k / N} \e(n^k \alpha)
  =
  \sum_{n \ge 1} \Lambda(n) \e^{- n^k z}.
\end{equation}
Thus, recalling definition \eqref{Rk-def} and using \eqref{Stilde-def},
for all $n \ge 1$ we have
\begin{equation}
\label{basic-Rk}
  R_k(n)
  =
  \sum_{n_1^k + \cdots + n_{k+1}^k = n}
    \Lambda(n_1) \cdots \Lambda(n_{k+1})
  =
  \e^{n / N}
  \int_{-1/2}^{1/2}
    \Stilde_k(\alpha)^{k+1} \, \e(-n \alpha) \, \dx \alpha.
\end{equation}
It is clear from the above identity that we are only interested in the
range $\alpha \in [-1/2, 1/2]$.
We record here the basic inequality
\begin{equation}
\label{z-bound}
  \vert z \vert^{-1}
  \ll
  \min \{ N, \vert \alpha \vert^{-1} \}.
\end{equation}
We also need the following exponential sum over the ``short interval''
$[1, H]$
\[
  U(\alpha, H)
  =
  \sum_{m = 1}^H \e(m \alpha),
\]
where $H \le N$ is a large integer.
We recall the simple inequality
\begin{equation}
\label{U-bound}
  \vert U(\alpha, H) \vert
  \le
  \min \{ H, \vert \alpha \vert^{-1} \}.
\end{equation}
With these definitions in mind and recalling \eqref{basic-Rk}, we
remark that
\begin{equation}
\label{basic-identity}
  \sum_{n = N + 1}^{N + H}
    \e^{-n / N} R_k(n)
  =
  \int_{-1/2}^{1/2} \Stilde_k(\alpha)^{k+1} U(-\alpha, H) \e(-N \alpha)
    \, \dx \alpha,
\end{equation}
which is the starting point for our investigation.
The basic strategy is to replace $\Stilde_k(\alpha)$ by its expected
main term, which is $\Gamma(1 + 1 / k) / z^{1 / k}$, and estimating the
ensuing error term by means of identity \eqref{identity}.
We use trivial bounds for $\Stilde_k$ and $z^{-1 / k}$ and
Lemma~\ref{LP-Lemma-gen} to majorise each term in the sum on the
right-hand side of \eqref{identity}, and the Cauchy-Schwarz inequality
and the same Lemma again to majorise the summand on the far right.
Of course, we may use Lemma~\ref{LP-Lemma-gen} only in a restricted
range, and we need a different argument on the remaining part of the
integration interval.
This leads to some complications in details.
In the conditional case, we have no such limitations and our result
holds in the ``natural'' range for $H$.
The details are in \S\ref{proof-cond}.
In both cases, we achieve the proof by removing the extraneous
factor $\e^{- n / N}$ from the left-hand side of \eqref{basic-identity}.

\section{Lemmas}

It will shorten our formulae somewhat to write
$\gamma_k = \Gamma(1 + 1 / k)$.
For brevity, we also set
\[
  \E_k(\alpha)
  :=
  \Stilde_k(\alpha)
  -
  \frac{\gamma_k}{z^{1 / k}}
  \qquad\text{and}\qquad
  A(N; c)
  :=
  \exp\Bigl\{ c \Bigl(\frac{\log N}{\log\log N}\Bigr)^{1/3} \Bigr\},
\]
where $c$ is a real constant.

\begin{Lemma}[Lemma~3 of \cite{LanguascoZ2016a}, Lemma~1 of
\cite{LanguascoZ2016b}]
\label{LP-Lemma-gen}
Let $\eps$ be an arbitrarily small positive constant, $k \ge 1$ be
an integer, $N$ be a sufficiently large integer and $L = \log N$.
Then there exists a positive constant $c_1 = c_1(\eps)$, which does
not depend on $k$, such that
\[
  \int_{-\xi}^{\xi}
    \bigl\vert \E_k(\alpha) \bigr\vert^2 \, \dx \alpha
  \ll_k
  N^{2 / k - 1} A(N; - c_1)
\]
uniformly for $0 \le \xi < N^{ -1 + 5 / (6 k) - \eps}$.
Assuming the Riemann Hypothesis we have
\[
  \int_{-\xi}^{\xi} \,
    \bigl\vert \E_k(\alpha) \bigr\vert^2 \, \dx \alpha
  \ll_k
  N^{1 / k}\xi L^2
\]
uniformly for $0 \le \xi \le 1 / 2$.
\end{Lemma}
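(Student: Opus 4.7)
The plan is to derive an explicit formula for $\E_k(\alpha)$ in terms of the nontrivial zeros of the Riemann zeta function and then bound the $L^2$ integral by analysing the resulting sum. First I would apply the Mellin identity $\e^{-w} = \frac{1}{2\pi\ii}\int_{(a)} \Gamma(s) w^{-s} \dx s$ (valid for $\mathrm{Re}(w) > 0$ and $a > 0$) with $w = n^k z$ inside the series defining $\Stilde_k(\alpha)$. Exchanging summation and integration, legitimate for any $a > 1/k$, yields
\[
  \Stilde_k(\alpha)
  =
  \frac{1}{2\pi\ii}
  \int_{(a)} \Gamma(s) z^{-s}
    \Bigl( -\frac{\zeta'}{\zeta}(ks) \Bigr) \dx s.
\]
Shifting the contour to the vertical line of abscissa $-\eps$, one collects three types of residues: at $s = 1/k$, the pole of $-\zeta'/\zeta(ks)$ produces $\gamma_k z^{-1/k}$, which is precisely the main term subtracted off in the definition of $\E_k$; at $s = 0$, the pole of $\Gamma$ contributes a bounded term $\Odi{1}$; and at each $s = \rho/k$ with $\rho = \beta + \ii\gamma$ a nontrivial zero of $\zeta$, a residue $-\frac{1}{k}\Gamma(\rho/k) z^{-\rho/k}$. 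The shifted contour integral is negligible, leaving the explicit formula
\[
  \E_k(\alpha)
  =
  -\frac{1}{k} \sum_\rho \Gamma(\rho/k) z^{-\rho/k} + \Odi{1}.
\]

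Next I would square and integrate this over $[-\xi, \xi]$. Stirling's approximation yields $|\Gamma(\rho/k)| \ll (1 + |\gamma|/k)^{\beta/k - 1/2} \e^{-\pi|\gamma|/(2k)}$, which effectively truncates the sum to $|\gamma| \ll k L$. Expanding $|\E_k(\alpha)|^2$ produces a double sum over pairs $(\rho_1, \rho_2)$ of zeros, and the key inner integral $\int_{-\xi}^{\xi} z^{-\rho_1/k} \overline{z^{-\rho_2/k}} \dx\alpha$ is controlled via~\eqref{z-bound}, together with an off-diagonal gain of size $(1 + |\gamma_1 - \gamma_2|)^{-1}$ obtained by integration by parts in $\alpha$. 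The problem thus reduces to summing $|z|^{-(\beta_1 + \beta_2)/k}$ over zeros, weighted by the Stirling factors.

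Under RH every $\beta = 1/2$, so the triangle inequality together with the zero-counting bound $N(T) \ll T L$ yields the pointwise estimate $|\E_k(\alpha)|^2 \ll L^2 |z|^{-1/k}$; integrating in $\alpha$ using~\eqref{z-bound} produces exactly $N^{1/k}\xi L^2$, which is the second assertion. Unconditionally, the Vinogradov--Korobov zero-free region $\beta \le 1 - c_0 (\log |\gamma|)^{-2/3} (\log\log |\gamma|)^{-1/3}$ is combined with a density estimate of the shape $N(\sigma, T) \ll T^{\mu(1-\sigma)} L^B$; the exponent $5/(6k)$ appearing in the hypothesis on $\xi$ reflects the best admissible choice of $\mu$, and the small saving harvested from the zero-free region manifests as the factor $A(N; -c_1)$.

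The main obstacle is the unconditional bound, where the loss from the double sum over zeros must be strictly dominated by the exponential saving $A(N; -c_1)$ uniformly in $\xi$. Balancing Stirling's truncation against both the off-diagonal cancellation and the zero-density input---while keeping the dependence of implicit constants on $k$ transparent---is delicate, and the limitation on the length $\xi$ in the unconditional statement arises precisely where these two ingredients cease to balance.
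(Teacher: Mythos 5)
First, a point of comparison: the paper does not prove this lemma at all. It is imported verbatim from Lemma~3 of \cite{LanguascoZ2016a} and Lemma~1 of \cite{LanguascoZ2016b}, and the authors explicitly warn, immediately after the statement, that the published proof ``contains oversights which are corrected in \cite{LanguascoZ2017e}.'' Your overall architecture --- the Mellin representation of $\Stilde_k$, the contour shift past the pole at $s = 1/k$ producing the explicit formula $\E_k(\alpha) = -k^{-1}\sum_\rho \Gamma(\rho/k) z^{-\rho/k} + \Odi{1}$, then an $L^2$ analysis of the sum over zeros using RH in one case and the Vinogradov--Korobov zero-free region together with a Huxley-type density estimate in the other --- is indeed the route taken in those references, so the skeleton is right.

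There is, however, a genuine gap at the central step, and it is precisely the step where the original published proof also stumbled. You claim that Stirling's formula ``effectively truncates the sum to $|\gamma| \ll kL$.'' This ignores the identity $|z^{-\rho/k}| = |z|^{-\beta/k} \e^{\gamma \arg(z)/k}$, where $\arg z = \arctan(-2\pi\alpha N)$ tends to $\mp \pi/2$ as soon as $|\alpha| \gg 1/N$. For zeros with $\gamma \arg(z) > 0$ the factor $\e^{|\gamma|\,|\arg z|/k}$ cancels the Stirling decay $\e^{-\pi|\gamma|/(2k)}$ down to $\e^{-c|\gamma|/(k|\alpha| N)}$, so the sum is only effectively truncated at $|\gamma| \ll k(1 + |\alpha| N)$ --- of order $k N^{5/(6k)}\cdot N^{1-5/(6k)} = kN^{\,1-\eps}/H$-type sizes near the endpoint $\xi$ in the unconditional case, vastly larger than $kL$. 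As a consequence, the triangle inequality does \emph{not} yield your claimed RH pointwise bound $|\E_k(\alpha)|^2 \ll L^2 |z|^{-1/k}$ for $1/N \ll |\alpha| \le 1/2$; it only gives something like $|z|^{-1/k}(1 + |\alpha| N)^{1 + 1/k} L^2$, whose integral over $[-\xi,\xi]$ is far larger than $N^{1/k}\xi L^2$. The conditional estimate therefore cannot be obtained pointwise: one must keep the double sum over zeros, exploit the off-diagonal decay $(1 + |\gamma_1 - \gamma_2|)^{-1}$ (which you introduce for the general setup and then abandon in the RH case), and split according to the sign of $\gamma \arg z$ and the size of $|\gamma|$ relative to $|\alpha| N$. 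The unconditional case needs, in addition, zero-density information over the full range $|\gamma| \ll k N \xi$, which is where the exponent $5/(6k)$ and the factor $A(N; -c_1)$ actually originate. In short, your sketch names the correct ingredients, but the quantitative bookkeeping it proposes breaks down at the truncation step, which is exactly the delicate point the present paper flags.
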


We remark that the proof of Lemma~\ref{LP-Lemma-gen} in
\cite{LanguascoZ2016a} contains oversights which are corrected in
\cite{LanguascoZ2017e}.
The next result is a variant of Lemma~4 of \cite{LanguascoZ2016a}: we
just follow the proof until the last step.
We need it to avoid dealing with the ``periphery'' of the major arc in
the unconditional case.

\begin{Lemma}
\label{Laplace-formula}
Let $N$ be a positive integer, $z = z(\alpha) = 1 / N - 2 \pi \ii \alpha$,
and $\mu > 0$.
Then, uniformly for $n \ge 1$ and $X > 0$ we have
\[
  \int_{-X}^X z^{-\mu} \e(-n \alpha) \, \dx \alpha
  =
  \e^{- n / N} \frac{n^{\mu - 1}}{\Gamma(\mu)}
  +
  \Odip{\mu}{\frac1{n X^{\mu}}}.
\]
\end{Lemma}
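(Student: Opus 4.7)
The plan is to compute the integral by extending it to the whole real line and controlling the tail separately. Write
\[
  \int_{-X}^{X} z^{-\mu} \e(-n\alpha) \, \dx\alpha
  \;=\;
  \int_{-\infty}^{\infty} z^{-\mu} \e(-n\alpha) \, \dx\alpha
  \;-\;
  \int_{\lvert\alpha\rvert > X} z^{-\mu} \e(-n\alpha) \, \dx\alpha,
\]
and aim to show that the first piece produces exactly the main term $\e^{-n/N} n^{\mu - 1}/\Gamma(\mu)$, while the second is $\Odip{\mu}{1/(n X^{\mu})}$.

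For the full integral, I would perform the change of variable $z = 1/N - 2\pi \ii \alpha$. As $\alpha$ ranges over $\mathbb{R}$, $z$ traces the vertical line $\operatorname{Re}(z) = 1/N$, and the relation $-2\pi\ii n\alpha = nz - n/N$ converts the integral into a Bromwich integral
\[
  \int_{-\infty}^{\infty} z^{-\mu} \e(-n\alpha) \, \dx\alpha
  \;=\;
  \e^{-n/N}\,\frac{1}{2\pi\ii}\int_{(1/N)} z^{-\mu}\, \e^{n z}\, \dx z.
\]
The scaling $w = n z$ together with Hankel's classical representation $1/\Gamma(\mu) = (2\pi\ii)^{-1}\int_{H} \e^{w} w^{-\mu} \dx w$, valid for $\mu > 0$ once the branch of $w^{-\mu}$ is fixed consistently with that of $z^{-\mu}$, identifies this as $\e^{-n/N} n^{\mu - 1}/\Gamma(\mu)$.

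The tail estimate is handled by one integration by parts: using $\e(-n\alpha) = (-2\pi\ii n)^{-1} \frac{\dx}{\dx\alpha}\e(-n\alpha)$ and noting $\frac{\dx}{\dx\alpha} z^{-\mu} = 2\pi\ii\mu\, z^{-\mu - 1}$, one obtains
\[
  \int_{X}^{\infty} z^{-\mu}\e(-n\alpha)\,\dx\alpha
  \;=\;
  \frac{z(X)^{-\mu}\,\e(-nX)}{2\pi\ii n}
  \;+\;
  \frac{\mu}{n}\int_{X}^{\infty} z^{-\mu - 1}\e(-n\alpha)\,\dx\alpha,
\]
since the boundary term at $+\infty$ vanishes for $\mu > 0$. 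Bounding $\lvert z \rvert \ge 2\pi\lvert\alpha\rvert$ yields $\lvert z(X) \rvert^{-\mu} \ll_\mu X^{-\mu}$ for the boundary term, and the remaining integral is $\ll_\mu X^{-\mu}/\mu$, so the whole tail is $\Odip{\mu}{1/(n X^\mu)}$. The integral on $(-\infty, -X)$ is treated symmetrically.

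The only delicate point is the branch choice in the Hankel/Bromwich step: one must verify that the same principal branch of $z^{-\mu}$ used in the statement is consistent with the branch needed to apply Hankel's formula after the contour deformation from the vertical line onto the standard Hankel keyhole around the negative real axis. Once this is pinned down the rest is routine bookkeeping; the integration-by-parts tail bound is elementary and uniform in $n \ge 1$ and $X > 0$ as required.
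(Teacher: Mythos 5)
Your proof is correct and follows essentially the same route as the source the paper cites for this lemma (Lemma~4 of \cite{LanguascoZ2016a}, "followed until the last step"): the main term comes from the Bromwich/Hankel identity $\frac{1}{2\pi\ii}\int_{(1/N)} w^{-\mu}\e^{nw}\,\dx w = n^{\mu-1}/\Gamma(\mu)$ after extracting the factor $\e^{-n/N}$, and the tails over $\vert\alpha\vert > X$ are handled by one integration by parts together with $\vert z\vert \ge 2\pi\vert\alpha\vert$. The only point worth making explicit is that for $0 < \mu \le 1$ the full-line and tail integrals converge only as improper (conditionally convergent) integrals, which your partial-integration step already justifies.
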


\begin{Lemma}
\label{Stilde-bound}
We have $\Stilde_k(\alpha) \ll_k N^{1 / k}$.
\end{Lemma}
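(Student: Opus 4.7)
The plan is to derive this bound from the trivial estimate, since $\Stilde_k(\alpha)$ is a sum of unimodular exponentials weighted by positive coefficients. Indeed, $\vert \e(n^k \alpha) \vert = 1$ and $\e^{-n^k / N} > 0$ imply
\[
  \bigl\vert \Stilde_k(\alpha) \bigr\vert
  \le
  \sum_{n \ge 1} \Lambda(n) \e^{-n^k / N},
\]
which is independent of $\alpha$. Thus it suffices to bound this real, positive sum, which is essentially $\Stilde_k(0)$.

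To handle it, I would apply Abel summation together with Chebyshev's bound $\psi(x) = \sum_{n \le x} \Lambda(n) \ll x$. Setting $f(x) = \e^{-x^k / N}$, so that $-f'(x) = (k x^{k - 1} / N) \e^{-x^k / N} \ge 0$, partial summation gives
\[
  \sum_{n \ge 1} \Lambda(n) \e^{-n^k / N}
  =
  \int_1^{\infty} \psi(x) \bigl( - f'(x) \bigr) \, \dx x
  \ll
  \frac{k}{N} \int_0^{\infty} x^k \e^{-x^k / N} \, \dx x.
\]
The substitution $u = x^k / N$ turns the right-hand side into $N^{1 / k} \Gamma(1 + 1 / k)$, which is $\Odip{k}{N^{1 / k}}$, as required.

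There is no real obstacle here: the lemma simply records that the expected main term $\gamma_k / z^{1 / k}$ for $\Stilde_k(\alpha)$, whose size at $\alpha = 0$ is already of order $N^{1 / k}$ by \eqref{z-bound}, also bounds $\Stilde_k$ in order of magnitude uniformly on $[-1/2, 1/2]$. The only point to watch is that the implied constant depends on $k$ through the factor $\Gamma(1 + 1 / k)$, but this is bounded (it tends to $1$ as $k \to \infty$), so absorbing it into the $\Odip{k}{\cdot}$ notation is harmless.
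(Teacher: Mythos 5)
Your proof is correct and follows essentially the same route as the paper: take absolute values to reduce to the sum at $\alpha = 0$, apply partial summation with the Chebyshev bound $\psi(x) \ll x$, and evaluate the resulting integral by the substitution $u = x^k / N$. No issues.
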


\begin{proof}
It is a straightforward application of partial summation and a crude
form of the Prime Number Theorem.
In fact, recalling that the summatory function of the von Mangoldt
$\Lambda$-function $\psi(X)$ satisfies $\psi(X) \ll X$, we have
\begin{align*}
  \sum_{n \ge 1} \Lambda(n) \e^{- n^k / N}
  &=
  \lim_{X \to + \infty}
    \sum_{n \le X} \Lambda(n) \e^{- n^k / N} \\
  &=
  \lim_{X \to + \infty}
  \Bigl(
    \psi(X) \e^{- X^k / N}
    +
    \frac kN \int_0^X \psi(t) t^{k - 1} \e^{- t^k / N} \, \dx t
  \Bigr) \\
  &\ll_k
  N^{-1}
  \int_0^{+\infty} t^k \e^{- t^k / N} \, \dx t
  \ll_k
  N^{1 / k}
  \int_0^{+\infty} u^{1 / k} \e^{- u} \, \dx u,
\end{align*}
by a trivial change of variables.
\end{proof}

Our next tool is Lemma~6 of Languasco \& Zaccagnini
\cite{LanguascoZ2017c}: it is a consequence of Lemma~4 of
\cite{GambiniLZ2018}, which depends, essentially, on a result of
Robert \& Sargos \cite{RobertS2006}.

\begin{Lemma}
\label{fourth-power}
For $N$ a positive integer, $\tau > 0$ and for real $k > 1$
and real $\eps > 0$ we have
\[
  \int_{-\tau}^\tau \vert \Stilde_k(\alpha) \vert^4 \dx \alpha
  \ll_k
  (\tau N^{2/k} + N^{4/k-1}) N^\eps.
\]
\end{Lemma}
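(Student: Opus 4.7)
The plan is to expand the fourth power as a quadruple Dirichlet sum, integrate term by term, and then invoke a counting bound of Robert--Sargos type. Writing $\vert \Stilde_k(\alpha) \vert^4 = \Stilde_k(\alpha)^2 \, \overline{\Stilde_k(\alpha)^2}$ and interchanging summation and integration yields
\[
  \int_{-\tau}^\tau \vert \Stilde_k(\alpha) \vert^4 \, \dx \alpha
  =
  \sum_{n_1, n_2, n_3, n_4 \ge 1}
    \Lambda(n_1) \Lambda(n_2) \Lambda(n_3) \Lambda(n_4)
    \, \e^{-(n_1^k + n_2^k + n_3^k + n_4^k) / N}
    \, K(n_1^k + n_2^k - n_3^k - n_4^k),
\]
where $K(x) = \int_{-\tau}^\tau \e(x \alpha) \, \dx \alpha$ satisfies $K(0) = 2 \tau$ and $\vert K(x) \vert \le \min\{ 2 \tau, 1 / (\pi \vert x \vert)\}$ for $x \ne 0$. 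The Gaussian-type damping $\e^{-n^k / N}$ effectively truncates each variable to $n_i \le M := N^{1 / k} L^{1/k}$ (the tails contribute $\Odi{1}$), while $\Lambda(n) \ll L$ loses at most an $L^4 \ll N^\eps$ factor overall.

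Next I would split the resulting sum into its diagonal part (where $n_1^k + n_2^k = n_3^k + n_4^k$) and its complement. The diagonal is controlled via $K(0) = 2 \tau$ and the Chebyshev bound $\sum_{n \le M} \Lambda(n) \ll M$, and contributes $\ll \tau M^2 L^2 \ll \tau N^{2/k} N^\eps$. The key arithmetic input for the off-diagonal part is the Robert--Sargos estimate (the content of Lemma~4 of \cite{GambiniLZ2018}): for real $k > 1$, $M \ge 1$ and $\Delta \ge 1$,
\[
  \#\bigl\{
    (m_1, m_2, m_3, m_4) \in [1, M]^4
    : 0 < \vert m_1^k + m_2^k - m_3^k - m_4^k \vert \le \Delta
  \bigr\}
  \ll_k (M^{4 - k} \Delta + M^2) \, M^\eps.
\]

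With this in hand I would dyadically decompose the range of $\vert x \vert = \vert n_1^k + n_2^k - n_3^k - n_4^k \vert$. On $\vert x \vert \le 1 / \tau$ the bound $\vert K(x) \vert \le 2 \tau$ combined with the counting estimate at level $\Delta = 1 / \tau$ gives a contribution of $\ll (M^{4 - k} + \tau M^2) N^\eps$. On each dyadic shell $\vert x \vert \asymp T$ with $1 / \tau \le T \ll M^k$ one has $\vert K(x) \vert \ll 1 / T$ and the counting bound at level $\Delta = T$ gives $\ll (M^{4 - k} + M^2 / T) N^\eps$; summing the $O(L)$ such shells contributes $\ll (M^{4 - k} + \tau M^2) N^\eps$ after absorbing the logarithm. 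Adding the diagonal and substituting $M = N^{1 / k} L^{1/k}$ recovers the claimed bound $\ll_k (\tau N^{2 / k} + N^{4/k - 1}) N^\eps$. The sole nontrivial ingredient is the Robert--Sargos counting estimate; everything else is Fourier expansion and routine dyadic bookkeeping, together with the standard remark that unbalanced dyadic ranges of the four variables $n_i$ produce contributions strictly dominated by the balanced case.
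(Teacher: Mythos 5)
The paper does not prove this lemma at all: it is imported verbatim as Lemma~6 of \cite{LanguascoZ2017c}, itself a consequence of Lemma~4 of \cite{GambiniLZ2018} resting on Robert--Sargos \cite{RobertS2006}, and your reconstruction (expand $\vert\Stilde_k\vert^4$ into a quadruple sum against the kernel $K$, truncate via the exponential damping, split diagonal/off-diagonal, and apply the Robert--Sargos counting bound dyadically in $\vert n_1^k+n_2^k-n_3^k-n_4^k\vert$) is exactly the argument of those sources, with the counting estimate correctly normalised. The one point stated too casually is the diagonal: Chebyshev alone only handles the trivial solutions $\{n_1,n_2\}=\{n_3,n_4\}$, and you still need the standard fact that the full equation $n_1^k+n_2^k=n_3^k+n_4^k$ has $\ll M^{2+\eps}$ solutions in $[1,M]^4$ (a divisor-type bound for integer $k$, essentially only trivial solutions for non-integral $k>1$) --- harmless given the $N^\eps$, but it is an arithmetic input beyond $\sum_{n\le M}\Lambda(n)\ll M$.
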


\begin{Lemma}
\label{fourth-power-avg}
For $k > 1$ and $N^{-c} < \tau \le N^{2 / k - 1}$ we have
\[
  \int_\tau^{1/2} \vert \Stilde_k(\alpha) \vert^4 \frac{\dx \alpha}\alpha
  \ll_k
  N^{4 / k - 1 + \eps} \tau^{-1}.
\]
\end{Lemma}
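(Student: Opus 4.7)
The plan is a straightforward dyadic decomposition of the interval $[\tau, 1/2]$ combined with Lemma~\ref{fourth-power}. Set $I_j = [2^j \tau, 2^{j + 1} \tau]$ for $j = 0, 1, \dots, J$, truncated at $1/2$; since $\tau > N^{-c}$, there are $J \ll \log N$ such blocks. On $I_j$ we replace $\alpha^{-1}$ by its supremum $(2^j \tau)^{-1}$ and then symmetrise the integral, so that the contribution of $I_j$ is dominated by
\[
  \frac1{2^j \tau}
  \int_{-2^{j + 1} \tau}^{2^{j + 1} \tau}
    \vert \Stilde_k(\alpha) \vert^4 \, \dx \alpha
  \ll_k
  \frac{N^\eps}{2^j \tau}
    \Bigl( 2^{j + 1} \tau N^{2 / k} + N^{4 / k - 1} \Bigr),
\]
by Lemma~\ref{fourth-power} applied with $2^{j + 1} \tau$ in place of $\tau$.

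Summing over $j = 0, \dots, J$, the first term in the parenthesis yields $\ll_k N^{2 / k}$ per block, hence $\ll_k N^{2 / k + \eps}$ in total after absorbing the $\log N$ from $J$ into $N^\eps$; the second term produces $\ll_k N^{4 / k - 1 + \eps} / (2^j \tau)$, which telescopes into a geometric series bounded by $\ll_k N^{4 / k - 1 + \eps} \tau^{-1}$. The hypothesis $\tau \le N^{2 / k - 1}$ is exactly the threshold at which $N^{2 / k} \le N^{4 / k - 1} \tau^{-1}$, so the second contribution absorbs the first and the desired inequality follows.

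There is no real obstacle: the non-trivial input is already packaged inside Lemma~\ref{fourth-power} (whose strength ultimately rests on the Robert--Sargos estimate), and the role of the present lemma is simply to transcribe that bound into a $1/\alpha$-weighted form convenient for the subsequent $L^2$-type estimates on $\E_k$ on sub-intervals of $[\tau, 1/2]$ bounded away from the origin. The only point requiring a moment of care is checking that the dyadic summation has $N^\eps$ many terms, which is guaranteed by the lower bound $\tau > N^{-c}$ and justifies the harmless inflation of the exponent of $N^\eps$ throughout.
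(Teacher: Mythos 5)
Your proof is correct and rests on exactly the same single input as the paper's, namely Lemma~\ref{fourth-power}; the only difference is that you handle the weight $1/\alpha$ by a dyadic decomposition of $[\tau, 1/2]$ (with the $\log N$ blocks controlled by $\tau > N^{-c}$ and absorbed into $N^\eps$), whereas the paper integrates by parts against $F(\xi) = \int_0^\xi \vert \Stilde_k(\alpha) \vert^4 \, \dx\alpha$. The bookkeeping is identical in both routes --- a subsidiary $N^{2/k+\eps}$ contribution absorbed via the hypothesis $\tau \le N^{2/k-1}$ plus the dominant $N^{4/k-1+\eps}\tau^{-1}$ --- so this is essentially the paper's argument in dyadic form.
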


\begin{proof}
We just need a partial integration from Lemma~\ref{fourth-power}: let
\[
  F(\xi)
  =
  \int_0^\xi \vert \Stilde_k(\alpha) \vert^4 \dx \alpha
  \ll_k
  (\xi N^{2/k} + N^{4/k-1}) N^\eps.
\]
Now
\[
  \int_\tau^{1/2} \vert \Stilde_k(\alpha) \vert^4 \frac{\dx \alpha}\alpha
  =
  \Bigl[ \frac{F(\alpha)}{\alpha} \Bigr]_\tau^{1/2}
  +
  \int_\tau^{1/2} \frac{F(\alpha)}{\alpha^2} \, \dx \alpha
  \ll_k
  N^{4 / k - 1 + \eps} \tau^{-1},
\]
since $\tau \le N^{2 / k - 1}$.
\end{proof}

\begin{Lemma}
\label{mt-evaluation}
For $N \to +\infty$, $H \in [1, N]$ and a real number $\lambda$ we
have
\[
  \sum_{n = N + 1}^{N + H}
    \e^{- n / N} n^{\lambda}
  =
  \frac1{\e}
  H N^{\lambda}
  +
  \Odip{\lambda}{H^2 N^{\lambda - 1}}.
\]
\end{Lemma}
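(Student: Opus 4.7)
The plan is a direct Taylor expansion. I would re-parametrize the summation by writing $n = N + m$ with $1 \le m \le H$ and pulling out the leading constants, so that
\[
  \sum_{n = N + 1}^{N + H} \e^{- n / N} n^\lambda
  =
  \e^{-1} N^\lambda \sum_{m = 1}^H \e^{-m/N} \Bigl( 1 + \frac{m}{N} \Bigr)^{\!\lambda}.
\]
Because $H \le N$, every ratio $m / N$ lies in $(0, 1]$, so the two $m$-dependent factors stay in a fixed compact region and admit uniform first-order Taylor expansions.

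I would then apply the elementary estimates $\e^{-m / N} = 1 + \Odi{m / N}$ and $(1 + m / N)^\lambda = 1 + \Odip{\lambda}{m / N}$, both valid for $m / N \in [0, 1]$; the dependence on $\lambda$ in the second estimate comes from bounding the binomial (or Taylor) remainder, and cancellation issues do not arise since we only need an upper bound on the error. Multiplying the two expansions gives $\e^{-m/N}(1 + m/N)^\lambda = 1 + \Odip{\lambda}{m / N}$.

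Substituting back, the main term contributes $\e^{-1} H N^\lambda$, while the error contributes at most $\Odip{\lambda}{N^{\lambda - 1} \sum_{m = 1}^H m} = \Odip{\lambda}{H^2 N^{\lambda - 1}}$, which matches the claimed expression. There is essentially no serious obstacle here; the only detail worth flagging is to check that the hypothesis $H \in [1, N]$ is what keeps the Taylor remainders uniform in $m$ with implied constants depending only on $\lambda$, and that the error term $H^2 N^{\lambda - 1}$ is indeed smaller than the main term $H N^\lambda$ precisely when $H = \odi{N}$ — consistent with the applications envisaged in Theorems~\ref{CGZ-uncond} and \ref{CGZ-cond}.
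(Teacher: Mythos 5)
Your proof is correct and follows essentially the same route as the paper: both arguments amount to a first-order Taylor expansion of the exponential factor and of $n^{\lambda} = N^{\lambda}(1 + m/N)^{\lambda}$, with the remainders bounded uniformly thanks to $H \le N$ (the paper organizes this as two successive replacements, $\e^{-n/N} \to \e^{-1}$ and then $n^{\lambda} \to N^{\lambda}$ via $\lambda \int_N^n t^{\lambda-1}\,\dx t$, rather than a single substitution $n = N + m$). No gaps.
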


\begin{proof}
Using the approximation $\e^{-n / N} = \e^{-1} + \Odi{H N^{-1}}$
introduces an error $\Odip{\lambda}{H^2 N^{\lambda - 1}}$.
If $\lambda = 0$ there is nothing left to prove.
If $\lambda < 1$ with $\lambda \ne 0$ we have
\[
  n^{\lambda} - N^{\lambda}
  =
  \lambda
  \int_N^n t^{\lambda - 1} \, \dx t
  \ll_{\lambda}
  (n - N)
  N^{\lambda - 1}
  \le
  H N^{\lambda - 1}
\]
for all $n \in [N + 1, N + H]$, and we see that
\[
  \sum_{n = N + 1}^{N + H} n^{\lambda}
  =
  H N^{\lambda}
  +
  \Odip{\lambda}{H^2 N^{\lambda - 1}}.
\]
If $\lambda \ge 1$ the argument above proves that
$n^{\lambda} - N^{\lambda} \ll_{\lambda} H (N + H)^{\lambda - 1}
  \ll_{\lambda} H N^{\lambda - 1}$
since $H \le N$, and the conclusion follows.
\end{proof}

\section{Proof of Theorem \ref{CGZ-uncond}}
\label{proof-uncond}

We need to introduce another parameter $B = B(N)$, defined as
\begin{equation}
\label{def-B}
  B = N^{2 \eps}
\end{equation}
Ideally, we would like to take $B = 1$, but we are prevented from
doing this by the estimate in \S\ref{sub-I3}.
We let $\C = \C(B,H) = [-1/2, -B/H] \cup [B/H, 1/2]$.
Recalling \eqref{basic-identity} we write
\begin{align*}
  \sum_{n = N + 1}^{N + H}
    \e^{-n / N} R_k(n)
  &=
  \gamma_k^{k + 1}
  \int_{-B/H}^{B/H} \frac{U(-\alpha, H)}{z^{(k + 1) / k}}
    \, \e(-N \alpha) \, \dx \alpha \\
  &\qquad+
  \int_{-B/H}^{B/H}
    \Bigl(
      \Stilde_k(\alpha)^{k+1}
      -
      \frac{\gamma_k^{k + 1}}{z^{(k + 1) / k}}
    \Bigr)
    U(-\alpha, H) \e(-N \alpha) \, \dx \alpha \\
  &\qquad+
  \int_\C \Stilde_k(\alpha)^{k+1} U(-\alpha, H) \e(-N \alpha) \, \dx \alpha \\
  &=
  \gamma_k^{k + 1} I_1 + I_2 + I_3,
\end{align*}
say.
The first summand gives rise to the main term via
Lemma~\ref{Laplace-formula}, the second one is majorised by means of
identity \eqref{identity} and the $L^2$-estimate provided by
Lemma~\ref{LP-Lemma-gen}, and the last one is easy to bound using
Lemma~\ref{fourth-power-avg}.

\subsection{Evaluation of \texorpdfstring{$I_1$}{I1}}
\label{sub-I1}

It is a straightforward application of Lemma~\ref{Laplace-formula}:
here we exploit the flexibility of having variable endpoints instead
of the full unit interval.
We have
\begin{equation}
\label{prep-mt}
  I_1
  =
  \int_{-B/H}^{B/H} \frac{U(-\alpha, H)}{z^{(k + 1) / k}}
    \, \e(-N \alpha) \, \dx \alpha
  =
  \frac1{\gamma_k}
  \sum_{n = N + 1}^{N + H} \e^{-n / N} n^{1 / k}
  +
  \Odip{k}{\frac HN \Bigl( \frac HB \Bigr)^{(k + 1) / k}}.
\end{equation}
We evaluate the sum on the right-hand side of \eqref{prep-mt} by means
of Lemma~\ref{mt-evaluation} with $\lambda = 1 / k$.
Summing up, we have
\begin{equation}
\label{final-mt}
  I_1
  =
  \frac1{\e \gamma_k} H N^{1 / k}
  +
  \Odip{k}{H^2 N^{1 / k - 1} + \frac HN \Bigl( \frac HB \Bigr)^{(k + 1) / k}}.
\end{equation}
We can neglect the second summand in the error term since $H \le N$
and $B \ge 1$.

\subsection{Bound for \texorpdfstring{$I_2$}{I2}}
\label{sub-I2}

We let $x = x(\alpha) = \Stilde_k(\alpha)$ and
$y = y(\alpha) = \gamma_k z^{-1/k}$ and use \eqref{identity}.
We recall the bounds~\eqref{z-bound} and \eqref{U-bound}, and
Lemma~\ref{Stilde-bound}.
Using Lemma~\ref{LP-Lemma-gen} and the Cauchy-Schwarz inequality where
appropriate, we have
\begin{align}
\notag
  I_2
  &\ll_k
  H
  \sum_{j = 1}^k
    \int_{-B / H}^{B / H}
      \vert x - y \vert^2 \cdot \vert x \vert^{k - j} \cdot
      \vert y \vert^{j - 1} \, \dx \alpha
  +
  H
  \int_{-B / H}^{B / H}
    \vert x - y \vert \cdot \vert y \vert^k \, \dx \alpha \\
\notag
  &\ll_k
  H
  \sum_{j = 1}^k
    \max_{\alpha} \vert x \vert^{k - j} \cdot
    \max_{\alpha} \vert y \vert^{j - 1}
    \int_{-B / H}^{B / H} \vert x - y \vert^2 \, \dx \alpha \\
\notag
  &\qquad+
  H
  \Bigl(
  \int_{-B / H}^{B / H} \vert x - y \vert^2 \, \dx \alpha
  \int_{-B / H}^{B / H} \vert y \vert^{2 k} \, \dx \alpha
  \Bigr)^{1 / 2} \\
\notag
  &\ll_k
  H
  \sum_{j = 1}^k
    N^{(k - j) / k} \cdot N^{(j - 1) / k}
    N^{2 / k - 1} A(N; -c_1) \\
\notag
  &\qquad+
  H N^{1 / k - 1 / 2} A \Bigl(N; - \frac12 c_1\Bigr)
  \Bigl(
  \int_{-B / H}^{B / H} \frac{\dx \alpha}{\vert z \vert^2}
  \Bigr)^{1 / 2} \\
\label{bound-I2}
  &\ll_k
  H N^{1 / k} A \Bigl(N; - \frac12 c_1\Bigr),
\end{align}
where $c_1 = c_1(\eps) > 0$ is the constant provided by
Lemma~\ref{LP-Lemma-gen}, which we can use if
$B / H < N^{ -1 + 5 / (6 k) - \eps}$.
Recalling the choice in \eqref{def-B}, we see that we can take
\begin{equation}
\label{H-bound}
  H > N^{1 - 5 / (6 k) + 3 \eps}.
\end{equation}

\subsection{Bound for \texorpdfstring{$I_3$}{I3}}
\label{sub-I3}

For $k \ge 3$ we have
\begin{align}
\notag
  I_3
  =
  \int_\C \Stilde_k(\alpha)^{k+1} U(-\alpha, H) \e(-N \alpha) \, \dx \alpha
  &\ll_k
  \max_{\alpha \in [-1/2, 1/2]}
    \vert \Stilde_k(\alpha) \vert^{k - 3}
  \int_\C \vert \Stilde_k(\alpha) \vert^4 \, \frac{\dx \alpha}\alpha \\
\label{bound-I3}
  &\ll_k
  N^{(k - 3) / k} \cdot N^{4 / k - 1 + \eps} (B / H)^{-1}
  \ll_k
  N^{1 / k + \eps} H / B,
\end{align}
by Lemmas~\ref{Stilde-bound} and~\ref{fourth-power-avg}.
This is $\ll_k N^{1 / k} H A(N; -c_1 / 2)$, by our choice
in~\eqref{def-B}.
For $k = 2$ we can use a slightly different argument, based on
Lemma~5 of \cite{LanguascoZ2017e}.
We omit the details.

\subsection{Completion of the proof}
\label{final-Th1}

For simplicity, from now on we assume that $H \le N^{1 - \eps}$.
Summing up from \eqref{final-mt}, \eqref{bound-I2} and
\eqref{bound-I3}, we proved that
\begin{align}
\label{smooth-Th1}
  \sum_{n = N + 1}^{N + H}
    \e^{-n / N} R_k(n)
  &=
  \frac{\gamma_k^k}{\e} H N^{1 / k}
  +
  \Odip{k}{H N^{1 / k} A \Bigl(N; - \frac12 c_1\Bigr)},
\end{align}
provided that \eqref{H-bound} holds, since the other error terms
are smaller in our range for $H$.
In order to achieve the proof, we have to remove the exponential
factor on the left-hand side, exploiting the fact that, since $H$ is
``small,'' it does not vary too much over the summation range.
We use a sort of bootstrapping argument: since
$\e^{-n / N} \in [\e^{-2}, \e^{-1}]$ for all $n \in [N + 1, N + H]$,
we can easily deduce from \eqref{smooth-Th1} that
\[
  \e^{-2}
  \sum_{n = N + 1}^{N + H} R_k(n)
  \le
  \sum_{n = N + 1}^{N + H}
    \e^{-n / N} R_k(n)
  \ll_k
  H N^{1 / k}.
\]
We can use this weak upper bound to majorise the error term arising
from the development $e^{-x} = 1 + \Odi{x}$ that we need in the
left-hand side of \eqref{smooth-Th1}.
In fact, we have
\begin{align*}
  \sum_{n = N + 1}^{N + H}
    \e^{-n / N} R_k(n)
  &=
  \sum_{n = N + 1}^{N + H}
    \bigl(\e^{-1} + \Odi{(n - N) N^{-1}} \bigr) R_k(n) \\
  &=
  \e^{-1}
  \sum_{n = N + 1}^{N + H} R_k(n)
  +
  \Odip{k}{ H^2 N^{1 / k - 1} }.
\end{align*}
Finally, substituting back into \eqref{smooth-Th1}, we obtain the
required asymptotic formula for $H$ as in the statement of
Theorem~\ref{CGZ-uncond}.

\section{Proof of Theorem \ref{CGZ-cond}}
\label{proof-cond}

The proof of the conditional version of our result is easier since
Lemma~\ref{LP-Lemma-gen} applies to the full unit interval, and this
partially spares us the trouble of dealing with two different ranges.
Recalling \eqref{basic-identity}, we write
\begin{align*}
  \sum_{n = N + 1}^{N + H}
    \e^{-n / N} R_k(n)
  &=
  \gamma_k^{k+1}
  \int_{-1/2}^{1/2} \frac{U(-\alpha, H)}{z^{(k + 1) / k}}
    \, \e(-N \alpha) \, \dx \alpha \\
  &\qquad+
  \int_{-1/2}^{1/2}
    \Bigl(
      \Stilde_k(\alpha)^{k+1}
      -
      \frac{\gamma_k^{k+1}}{z^{(k + 1) / k}}
    \Bigr)
    U(-\alpha, H) \e(-N \alpha) \, \dx \alpha,
\end{align*}
The main term is evaluated as in \S\ref{proof-uncond}, whereas for the
secondary term we have recourse again to \eqref{identity}.
By Lemma~\ref{Laplace-formula} with $X = 1/ 2$ and
Lemma~\ref{mt-evaluation} with $\lambda = 1 / k$, we find that
\begin{align*}
  \int_{-1/2}^{1/2} \frac{U(-\alpha, H)}{z^{(k + 1) / k}}
    \, \e(-N \alpha) \, \dx \alpha
  &=
  \frac1{\gamma_k}
  \sum_{n = N + 1}^{N + H} \e^{-n / N} n^{1 / k}
  +
  \Odip{k}{\frac HN} \\
  &=
  \frac1{\e \gamma_k} H N^{1 / k}
  +
  \Odip{k}{H^2 N^{1/k - 1}}.
\end{align*}
%
For the secondary term we argue as above, setting
$x = x(\alpha) = \Stilde_k(\alpha)$ and
$y = y(\alpha) = \gamma_k z^{-1/k}$ and using \eqref{identity}.
First we deal with the range $[-1/H, 1/H]$:
\begin{align*}
  \int_{-1 / H}^{1 / H}
  & \Bigl(
      \Stilde_k(\alpha)^{k+1}
      -
      \frac{\gamma_k^{k+1}}{z^{(k + 1) / k}}
    \Bigr)
    U(-\alpha, H) \e(-N \alpha) \, \dx \alpha \\
  &\ll
  H
  \sum_{j = 1}^k
    \int_{-1 / H}^{1 / H}
      \vert x - y \vert^2 \cdot \vert x \vert^{k - j} \cdot
      \vert y \vert^{j - 1} \, \dx \alpha
  +
  H
  \int_{-1 / H}^{1 / H}
    \vert x - y \vert \cdot \vert y \vert^k \, \dx \alpha \\
  &\ll_k
  H
  \sum_{j = 1}^k
    \max_{\alpha} \vert x \vert^{k - j} \cdot
    \max_{\alpha} \vert y \vert^{j - 1}
    \int_{-1 / H}^{1 / H} \vert x - y \vert^2 \, \dx \alpha \\
  &\qquad+
  H
  \Bigl(
  \int_{-1 / H}^{1 / H} \vert x - y \vert^2 \, \dx \alpha
  \int_{-1 / H}^{1 / H} \vert y \vert^{2 k} \, \dx \alpha
  \Bigr)^{1/2} \\
  &\ll_k
  H
  \sum_{j = 1}^k
    N^{(k - j) / k} \cdot N^{(j - 1) / k}
    N^{1 / k} H^{-1} L^2
  +
  H N^{1 / (2 k)} H^{-1 / 2} L
  \Bigl(
  \int_{-1 / H}^{1 / H} \frac{\dx \alpha}{\vert z \vert^2}
  \Bigr)^{1/2} \\
  &\ll_k
  N L^2
  +
  H^{1 / 2} N^{1 / 2 + 1 / (2 k)} L.
\end{align*}
In the remaining range we use a partial-integration argument, in order
to exploit the full force of \eqref{U-bound}.
By Lemma~\ref{LP-Lemma-gen} we have
\[
  F(\xi)
  :=
  \int_0^\xi \bigl\vert \E_k(\alpha) \bigr\vert^2 \, \dx \alpha
  \ll_k
  N^{1 / k} \xi L^2.
\]
Hence
\begin{equation}
\label{avg-E^2-RH}
  \int_{1 / H}^{1 / 2}
    \bigl\vert \E_k(\alpha) \bigr\vert^2 \, \frac{\dx \alpha}{\alpha}
  =
  \Bigl[ \frac{F(\alpha)}{\alpha} \Bigr]_{1 / H}^{1 / 2}
  +
  \int_{1 / H}^{1 / 2}
    \frac{F(\alpha)}{\alpha^2} \, \dx \alpha
  \ll_k
  N^{1 / k} L^3.
\end{equation}
Choosing $x$ and $y$ as above and using \eqref{avg-E^2-RH}, we see
that the contribution from the range $[1/H, 1/2]$ is
\begin{align*}
  \int_{1 / H}^{1 / 2}
  & \Bigl(
      \Stilde_k(\alpha)^{k+1}
      -
      \frac{\gamma_k^{k+1}}{z^{(k + 1) / k}}
    \Bigr)
    U(-\alpha, H) \e(-N \alpha) \, \dx \alpha \\
  &\ll_k
  \sum_{j = 1}^k
    \max_{\alpha} \vert x \vert^{k - j} \cdot
    \max_{\alpha} \vert y \vert^{j - 1}
    \int_{1 / H}^{1 / 2} \vert x - y \vert^2 \, \frac{\dx \alpha}{\alpha}
  +
  \Bigl(
  \int_{1 / H}^{1 / 2} \vert x - y \vert^2 \, \frac{\dx \alpha}\alpha
  \int_{1 / H}^{1 / 2} \vert y \vert^{2 k} \, \frac{\dx \alpha}\alpha
  \Bigr)^{1/2} \\
  &\ll_k
  \sum_{j = 1}^k
    N^{(k - j) / k} \cdot N^{(j - 1) / k}
    N^{1 / k} L^3
  +
  (N^{1 / k} L^3)^{1 / 2} H \\
  &\ll_k
  N L^3
  +
  H N^{1 / (2 k)} L^{3 / 2}.
\end{align*}

\subsection{Completion of the proof}
\label{final-Th2}

Summing up, we proved that
\begin{equation}
\label{smooth-Th2}
  \sum_{n = N + 1}^{N + H}
    \e^{-n / N} R_k(n)
  =
  \frac{\gamma_k^k}{\e} H N^{1 / k}
  +
  \Odip{k}{\Phi_k(N, H)},
\end{equation}
where $\Phi_k(N, H) = N L^3 + H N^{1 / (2 k)} L^{3 / 2} + H^2 N^{1/k - 1} + H^{1 / 2} N^{1 / 2 + 1 / (2 k)} L$.
As in \S\ref{final-Th1} above, we need to remove the exponential
factor, exploiting the fact that, since $H$ is ``small,'' it does not
vary too much over the summation range.
We argue in a slightly different fashion, since we aim at a stronger
error term.
Using the fact that $\e^{-n / N} = \e^{-1} + \Odi{H / N}$, we have
\begin{equation}
\label{Th2-final}
  \sum_{n = N + 1}^{N + H}
    R_k(n)
  =
  \gamma_k^k H N^{1 / k}
  +
  \Odip{k}{\Phi_k(N, H) + \frac HN \sum_{n = N + 1}^{N + H} R_k(n)}.
\end{equation}
The last term is $\ll_k H / N ( H N^{1/k} + \Phi_k(N, H)) \ll_k H^2 N^{1/k - 1} + \Phi_k(N, H) \ll_k \Phi_k(N, H)$ by \eqref{smooth-Th2}, since $H \le N$.
Substituting into \eqref{Th2-final} we find
\[
  \sum_{n = N + 1}^{N + H}
    R_k(n)
  =
  \gamma_k^k H N^{1 / k}
  +
  \Odip{k}{\Phi_k(N, H)}.
\]
This is an asymptotic formula provided that
$H = \infty\bigl(N^{1 - 1 / k} L^3\bigr)$ and $H = \odi{N}$.
Theorem~\ref{CGZ-cond} is fully proved.

\section{Proof of Theorems \ref{CGZ'-uncond} and \ref{CGZ'-cond}}
\label{proof-density-1}

We split the unit interval as in \S\ref{proof-uncond} and proceed
in the same way.
With a similar notation, we find that
\[
  \sum_{n = N + 1}^{N + H}
    \e^{-n / N} R'_k(n)
  =
  \gamma_k^k I'_1 + I'_2 + I'_3,
\]
say, where
\[
  I'_1
  =
  H \e^{-1} + \Odip{k}{H^2 / N + H / B},
\]
by Lemma~\ref{Laplace-formula} and standard estimates.
We also have
\[
  I'_2
  \ll_k
  H A \Bigl(N; - \frac12 c_1\Bigr),
\]
arguing as above by means of identity \eqref{identity} with $k$
replaced by $k - 1$.
For $k \ge 4$ we have
\[
  I'_3
  =
  \int_\C \Stilde_k(\alpha)^k U(-\alpha, H) \e(-N \alpha) \, \dx \alpha
  \ll_k
  \max_{\alpha \in [-1/2, 1/2]}
    \vert \Stilde_k(\alpha) \vert^{k - 4}
  \int_\C \vert \Stilde_k(\alpha) \vert^4 \, \frac{\dx \alpha}\alpha
  \ll_k
  N^{\eps} H / B,
\]
by Lemmas~\ref{Stilde-bound} and~\ref{fourth-power-avg}, which is
$\ll_k H A(N; -c_1 / 2)$, by our choice of $B$.

For $k \in \{2$, $3 \}$ we need the extension to $\Stilde_3$ of Lemma~7
of Tolev \cite{Tolev1992}.
The details of the proof are contained in work in progress by
A.~Languasco and the last Author.
Without this Lemma, for $k = 3$ we need to take $B = N^{5 / 72 + \eps}$
and we obtain a correspondingly weaker result.

\begin{Lemma}[Tolev]
\label{Tolev-Lemma}
Let $k > 1$ and $0 < \tau \le 1 / 2$. Then
\[
  \int_{-\tau}^{\tau} \vert \Stilde_k(\alpha) \vert^2 \, \dx \alpha
  \ll_k
  \bigl(\tau N^{1/k} + N^{2/k - 1}\bigr) (\log N)^3.
\]
\end{Lemma}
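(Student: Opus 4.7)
The plan is to expand $|\Stilde_k(\alpha)|^2$ as a double sum and integrate term by term over $[-\tau, \tau]$. Writing
\[
  |\Stilde_k(\alpha)|^2
  =
  \sum_{m, n \ge 1}
    \Lambda(m) \Lambda(n) \e^{-(m^k + n^k)/N} \e((m^k - n^k) \alpha),
\]
the diagonal ($m = n$) contributes $2\tau \sum_n \Lambda(n)^2 \e^{-2n^k/N}$, which is $\ll_k \tau N^{1/k} L$ by the computation in the proof of Lemma~\ref{Stilde-bound} combined with $\Lambda(n)^2 \le \Lambda(n) \log n$; this already falls within the target bound. For $m \ne n$, the $\alpha$-integral produces the kernel $\sin(2\pi\tau(m^k - n^k))/(\pi(m^k - n^k))$, whose absolute value is $\le \min(2\tau, 1/(\pi|m^k - n^k|))$.

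For the off-diagonal I would exploit the elementary factorization $m^k - n^k = (m - n) \sum_{j = 0}^{k - 1} m^{k - 1 - j} n^j$, which gives $|m^k - n^k| \ge k \min(m, n)^{k - 1} |m - n|$ whenever $m \ne n$. By symmetry I restrict to $m > n \ge 1$, set $d = m - n$, and split the inner sum according to whether $d \le D_n := 1/(2 \tau k n^{k-1})$ (using the kernel bound $2\tau$) or $d > D_n$ (using the bound $1/(\pi k n^{k-1} d)$). The first range is non-empty only for $n \le n_0 := (2 \tau k)^{-1/(k-1)}$, and with the crude bound $\Lambda(n + d) \ll L$ together with the Dirichlet series estimate $\sum_{n \le n_0} \Lambda(n)/n^{k - 1} \ll_k L$ (which is $\ll \log n_0$ for $k = 2$ and $\ll_k 1$ for $k \ge 3$), the resulting contribution is controlled. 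In the second range, the $d$-sum is effectively truncated at $d \ll N^{1/k} L$ by the Gaussian decay of $\e^{-(n + d)^k/N}$, producing only a further logarithmic factor before an appeal to the same Dirichlet series estimate.

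The subtlest step is isolating the $N^{2/k - 1}$ summand in the target: morally this piece reflects the concentration of $|\Stilde_k|^2$ near $\alpha = 0$, where $\Stilde_k(\alpha) \approx \gamma_k N^{1/k}$ gives an integrand of size $N^{2/k}$ and thus a contribution $\ll N^{2/k - 1}$ from the window $|\alpha| \le 1/N$. Separating this ``corner'' piece cleanly from the diagonal term $\tau N^{1/k} L$ and absorbing all the logarithmic losses from the off-diagonal analysis into the final $L^3$ factor should yield the stated estimate. For $k = 2$ this reduces to the original Lemma~7 of Tolev; the main new labour, and the main obstacle, is tracking the dependence on $k$ through the factorization of $m^k - n^k$, the cutoff $n_0$, and the behaviour of $\sum_n \Lambda(n) \e^{-n^k/N}/n^{k - 1}$, so that the implicit constants and powers of $L$ match what is claimed uniformly in $k$.
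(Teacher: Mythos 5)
The paper itself gives no proof of Lemma~\ref{Tolev-Lemma}: it is quoted as ``the extension to $\Stilde_3$ of Lemma~7 of Tolev,'' with the details deferred to work in progress, so your expansion of $\vert\Stilde_k\vert^2$ into a double sum, the kernel bound $\min(2\tau,1/(\pi\vert m^k-n^k\vert))$ and the factorisation of $m^k-n^k$ are certainly the intended route. The genuine gap sits exactly where your sketch turns vague (``the resulting contribution is controlled,'' ``should yield the stated estimate''). If you carry out your own estimates, the near-diagonal range $d\le D_n$ contributes $\ll L\sum_{n\le n_0}\Lambda(n)n^{1-k}$, which is $\asymp_k L$ for $k\ge 3$ (the series converges and its first term is $\gg_k1$), and the far range contributes $\ll_k L^2$; but for $k\ge3$ and small $\tau$ the target $(\tau N^{1/k}+N^{2/k-1})L^3$ is $o(1)$, so these additive losses cannot be absorbed. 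Part of this is an inefficiency of your splitting --- for small $n$ the linearisation $\vert m^k-n^k\vert\ge kn^{k-1}d$ badly overcounts the pairs with $\vert m^k-n^k\vert\le1/\tau$, whose true number is $\asymp\tau^{-1/k}$, and repairing this improves the $O_k(L)$ to $O_k(\tau^{1-2/k}L)$ --- but no repair can reach the stated bound.

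Indeed the statement is false for $k\ge3$. Take $k=3$ and $\tau=N^{-3/4}$, which is admissible in the unconditional part of Lemma~\ref{LP-Lemma-gen}; then $\int_{-\tau}^{\tau}\vert\E_3(\alpha)\vert^2\,\dx\alpha=o(N^{-1/3})$, while
\[
  \gamma_3^2\int_{-\tau}^{\tau}\frac{\dx\alpha}{\vert z\vert^{2/3}}
  \gg
  \int_{1/N}^{\tau}\frac{\dx\alpha}{\alpha^{2/3}}
  \gg
  \tau^{1/3}
  =
  N^{-1/4},
\]
so that $\int_{-\tau}^{\tau}\vert\Stilde_3(\alpha)\vert^2\,\dx\alpha\gg N^{-1/4}$, whereas $(\tau N^{1/3}+N^{-1/3})L^3\asymp N^{-1/3}L^3$ is smaller by a factor $N^{1/12}L^{-3}$. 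The source of the discrepancy is that for $k>2$ the integral $\int_{1/N}^{\tau}\alpha^{-2/k}\,\dx\alpha\asymp\tau^{1-2/k}$ is dominated by its upper endpoint, unlike Tolev's range $1<c<15/14$ (and the boundary case $k=2$), where it is $\ll N^{2/c-1}$, resp.\ $\ll L$; your ``corner piece'' heuristic, which only integrates over $\vert\alpha\vert\le1/N$, is precisely what misses this term. The bound your decomposition can actually deliver, and which you should aim for, is $\ll_k(\tau N^{1/k}+\tau^{1-2/k}+N^{2/k-1})L^3$; as stated, the lemma is correct only for $k\le2$, where $N^{2/k-1}\ge1$ swallows all the logarithmic losses and your argument does close.
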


A partial-integration argument similar to the ones above then yields
\[
  I'_3
  \ll_k
  \max_{\alpha \in [-1/2, 1/2]}
    \vert \Stilde_3(\alpha) \vert^{k - 2}
  \int_\C \vert \Stilde_k(\alpha) \vert^2 \, \frac{\dx \alpha}\alpha
  \ll_k
  N^{(k - 2) / k}
  \Bigl( N^{1 / k} L^4 + \frac HB N^{(2 - k) / k} L^3 \Bigr),
\]
which is $\odi{H N^{-\eps}}$ if $B = N^{2 \eps}$ as in \eqref{def-B},
for $H$ as in the statement of Theorem~\ref{CGZ'-uncond}.

We omit the details of the proof of Theorem~\ref{CGZ'-cond}.

\smallskip
\noindent
\textbf{Acknowledgement.}
We thank Alessandro Languasco for several conversations on this topic.

\providecommand{\MR}{\relax\ifhmode\unskip\space\fi MR }
\providecommand{\MRhref}[2]{%
  \href{http://www.ams.org/mathscinet-getitem?mr=#1}{#2}
}
\providecommand{\href}[2]{#2}

\bigskip

\begin{tabular}{l}
Dipartimento di Scienze, Matematiche, Fisiche e Informatiche \\
Universit\`a di Parma \\
Parco Area delle Scienze 53/a \\
43124 Parma, Italia \\
email (MC): \texttt{cantarini\_m@libero.it} \\
email (AG): \texttt{a.gambini@unibo.it} \\
email (AZ): \texttt{alessandro.zaccagnini@unipr.it}
\end{tabular}


\begin{thebibliography}{1}

\bibitem{GambiniLZ2018}
A.~Gambini, A.~Languasco, and A.~Zaccagnini, \emph{A {Diophantine}
  approximation problem with two primes and one $k$-th power of a prime}, J.
  Number Theory \textbf{188} (2018), 210--228.

\bibitem{LanguascoZ2016b}
A.~Languasco and A.~Zaccagnini, \emph{A {Diophantine} problem with prime
  variables}, Highly Composite: Papers in Number Theory, Proceedings of the
  ``International Meeting in Number Theory,'' celebrating the 60th birthday of
  Prof.~R.~Balasubramanian, Harish-Chandra Research Institute, Allahabad,
  Dec.~2011 (V.~Kumar Murty, D.~S. Ramana, and R.~Thangadurai, eds.), Ramanujan
  Mathematical Society--Lecture Notes Series, vol.~23, 2016, pp.~157--168.

\bibitem{LanguascoZ2016c}
A.~Languasco and A.~Zaccagnini,
  \emph{Short intervals asymptotic formulae for binary problems with
  primes and powers, {II}: density $1$}, Monatsh. Math. \textbf{181} (2016),
  no.~3, 419--435.

\bibitem{LanguascoZ2016a}
A.~Languasco and A.~Zaccagnini,
  \emph{Sum of one prime and two squares of primes in short intervals},
  J. Number Theory \textbf{159} (2016), 45--58.

\bibitem{LanguascoZ2017c}
A.~Languasco and A.~Zaccagnini,
  \emph{Sums of four prime cubes in short intervals}, Acta Math. Hung.
  (2019). Accepted for publication. Arxiv preprint
  \url{http://arxiv.org/abs/1705.04457}.

\bibitem{LanguascoZ2017e}
A.~Languasco and A.~Zaccagnini,
  \emph{Sums of one prime power and two squares of primes in short
  intervals}, Rocky Mountain J. Math. (2019). Accepted for
  publication. Arxiv preprint \url{http://arxiv.org/abs/1806.04934}.

\bibitem{RobertS2006}
O.~Robert and P.~Sargos, \emph{Three-dimensional exponential sums with
  monomials}, J. reine angew. Math. \textbf{591} (2006), 1--20.

\bibitem{Tolev1992}
D.~I. Tolev, \emph{On a {Diophantine} inequality involving prime numbers}, Acta
  Arith. \textbf{51} (1992), 289--306.

\end{thebibliography}
\end{document}